\documentclass[11pt]{amsart}

\usepackage[T1]{fontenc}
\usepackage[utf8]{inputenc}
\usepackage{lmodern}
\usepackage{microtype}
\usepackage{amsmath,amssymb,amsthm,mathtools}
\usepackage{enumitem}
\usepackage{xcolor}
\usepackage[colorlinks=true,linkcolor=blue!55!black,citecolor=blue!55!black,urlcolor=blue!55!black]{hyperref}
\usepackage[capitalise,noabbrev]{cleveref}

\newtheorem{theorem}{Theorem}[section]

\newtheorem{corollary}[theorem]{Corollary}

\newtheorem{problem}[theorem]{Problem}
\theoremstyle{definition}

\theoremstyle{remark}
\newtheorem{remark}[theorem]{Remark}

\newcommand{\R}{\mathbb R}
\newcommand{\eps}{\varepsilon}
\newcommand{\Ad}{\operatorname{Ad}}
\newcommand{\ad}{\operatorname{ad}}
\newcommand{\Span}{\operatorname{span}}
\newcommand{\rank}{\operatorname{rank}}
\newcommand{\SO}{\mathrm{SO}}
\newcommand{\SU}{\mathrm{SU}}
\newcommand{\Sp}{\mathrm{Sp}}

\title[Adjoint reductions of tangent spheres]{Adjoint reductions of tangent bundles of spheres}
\author{Leonardo Mart\'inez-Sandoval}
\address{Facultad de Ciencias, Universidad Nacional Aut\'onoma de M\'exico (UNAM), Mexico}
\email{leomtz@ciencias.unam.mx}
\urladdr{https://orcid.org/0000-0002-5104-9635}
\dedicatory{Dedicated to Luis Montejano on the occasion of his 75th birthday.}
\date{\today}

\subjclass[2020]{Primary 55R25, 57R15; Secondary 22E46, 46B20, 52A20}
\keywords{adjoint representation, reduction of structure group, tangent bundle of a sphere, $G$-structures on spheres, Stiefel manifold, vector fields on spheres, exceptional Lie group $E_7$, Banach isometric subspace problem}

\begin{document}

\begin{abstract}
We study reductions of the tangent bundle of $S^{\dim G}$ through the adjoint
representation of a compact connected Lie group $G$.  If $d=\dim G$ and
$r=\rank G$, we show that the adjoint map $\Ad\colon G\longrightarrow \SO(d)$ 
is homotopic, as an ordinary map, to one with values in $\SO(d-r+1)$.  It follows that every vector bundle over a sphere associated to a principal $G$-bundle via the adjoint representation admits $r-1$ linearly independent sections.
Combined with Adams's theorem on vector fields on spheres, this gives the necessary
condition $r\le \rho(d+1)$
for an adjoint reduction of $TS^d$.

In particular, no such reduction exists for a
non-trivial compact connected group, simple or not, of dimension $d>1$ with
$d\not\equiv3\pmod4$.  As an application, this excludes the adjoint $E_7$-reduction
of $TS^{133}$ that is the exceptional branch in the structure-group argument of
Bor, Hern\'andez-Lamoneda, Jim\'enez-Desantiago and Montejano for Banach's isometric
subspace problem.
\end{abstract}

\maketitle

\section{Introduction}

Reductions of the structure group of tangent bundles of spheres are a classical
problem in topology.  Leonard and Ozaki obtained broad nonexistence results for
proper reductions, while \v{C}adek and Crabb later classified reductions through
homomorphisms between the classical groups $\SO$, $\SU$ and $\Sp$
\cite{Leonard1971,Ozaki1991,CadekCrabb2006}.  The general problem is an unstable
one and can require detailed information about both the homomorphism and the
homotopy type of the groups involved.

Here we isolate a narrower class of reductions.  Let $G$ be a $d$-dimensional compact connected
Lie group of rank $r$. Consider
its adjoint representation
\[
  \Ad\colon G\longrightarrow\SO(\mathfrak g)\cong\SO(d).
\]

We ask when the tangent bundle of $S^d$ can reduce through this map.
Our main result is the following.

\medskip
\noindent\textbf{Theorem~\ref{thm:rank-obstruction}.}
\emph{Let $G$ be a $d$-dimensional compact connected Lie group of rank $r$.  If
$TS^d$ admits a reduction through
\[
  \Ad\colon G\longrightarrow\SO(d),
\]
then
\[
  r\le \rho(d+1),
\]
where $\rho$ is the Radon--Hurwitz function.}
\medskip

The numerical condition has an immediate uniform consequence.

\medskip
\noindent\textbf{Corollary~\ref{cor:mod-four}.}
\emph{Let $G$ be a nontrivial compact connected Lie group of dimension $d>1$.  If $d\not\equiv3\pmod4$, 
then $TS^d$ admits no reduction through the adjoint representation of $G$.}
\medskip

For many individual simple groups, stronger non-reduction statements follow from
the classical theory of $G$-structures on spheres.  The point here is different:
the obstruction above is uniform, uses only the intrinsic rank and dimension of
$G$, and applies without assuming that $G$ is simple or classical.

One motivation for the question comes from Banach's isometric subspace problem. Banach posed his isometric subspace problem in 1932 \cite{Banach1932}.
The real case $n=2$ was settled by Auerbach, Mazur and Ulam
\cite{AuerbachMazurUlam1935}, while Dvoretzky proved the conjecture for
infinite-dimensional real spaces \cite{Dvoretzky1959}.  Gromov subsequently
proved it for even $n$ and, for real spaces, for odd $n$ whenever
$\dim V\ge n+2$ \cite{Gromov1967}.  In the complex finite-dimensional setting,
Bracho and Montejano proved the conjecture for $n\equiv1\pmod4$
\cite{BrachoMontejano2021}.  The real case $n=3$ was settled by Ivanov,
Mamaev and Nordskova \cite{IvanovMamaevNordskova2023}.  For real
$n=4k+1\ge5$, Bor, Hern\'andez-Lamoneda, Jim\'enez-Desantiago and
Montejano proved the conjecture with the possible exception of $n=133$
\cite{BorEtAl2021}.\footnote{Recent preprints have also claimed complete
solutions. Zhang posted such a claim in 2025 \cite{Zhang2025}. While this
manuscript was being prepared, Lu and Yang posted another
\cite{LuYang2026}. We have not undertaken a detailed verification of these
claims and make no assertion here about their correctness; the latter
appeared too recently for a detailed comparison.} Their
structure-group analysis isolates precisely one exceptional irreducible branch:
a reduction of $TS^{133}$ through the adjoint group of type $E_7$
\cite{BorEtAl2021}.  Since $\dim E_7=133\equiv1\pmod4$,
Corollary~\ref{cor:mod-four} excludes that branch.

The proof of the obstruction is topologically economical once the adjoint structure is exploited. A fixed-frame compression argument, based on the connectivity of Stiefel manifolds and the homotopy lifting property, shows that the adjoint map can be deformed into a smaller orthogonal group in a way controlled by the rank of G. Via clutching, this compression forces trivial line summands in the corresponding adjoint bundles over spheres. Adams's theorem on vector fields then gives the obstruction. Thus, aside from Adams's deep theorem, the argument uses only standard facts about bundles, Stiefel manifolds, and clutching.

The paper is organized as follows.  \Cref{sec:compression} develops the fixed-frame
compression criterion, and \cref{sec:adjoint} specializes it to adjoint
representations.  \Cref{sec:bundles-spheres} passes to bundles over spheres,
proves \cref{thm:rank-obstruction}, and compares the resulting obstruction with the
classical $G$-structure literature.  \Cref{sec:banach} records the
$133$-dimensional application, and \cref{sec:further} ends with the resulting
classification question for adjoint reductions.

\section{Fixed-frame compression}\label{sec:compression}

Let $V_{N,q}$ be the real Stiefel manifold of ordered orthonormal $q$-frames in
$\R^N$.  For the standard frame $\mathbf e=(e_1,\ldots,e_q)$, remembering the
first $q$ columns gives the fiber bundle
\begin{equation}\label{eq:stiefel-fibration}
  \SO(N-q)\hookrightarrow \SO(N)
  \xrightarrow{\ p_q\ }V_{N,q},
  \qquad
  p_q(A)=(Ae_1,\ldots,Ae_q).
\end{equation}
Its fiber over $\mathbf e$ is the subgroup fixing $e_1,\ldots,e_q$ pointwise.  We
use the standard fact that
\begin{equation}\label{eq:stiefel-connectivity}
  V_{N,q}\text{ is }(N-q-1)\text{-connected};
\end{equation}
see, for example, \cite[Chapter~8, Theorem 5.1]{Husemoller1994}.

A continuous map $a\colon X\to\SO(N)$ will be called \emph{$q$-compressible} if it
is homotopic to a map whose image lies in the standard subgroup
$\SO(N-q)\subset\SO(N)$.

\begin{theorem}\label{thm:fixed-frame}
Let $G$ be a compact connected Lie group and let
\[
  \rho\colon G\longrightarrow\SO(N)
\]
be an orthogonal representation.  Suppose that a closed subgroup $K\subset G$ acts
trivially on a $q$-dimensional subspace $W\subset\R^N$, where $1\le q<N$, and that
\begin{equation}\label{eq:dimension-condition}
  \dim(G/K)\le N-q-1.
\end{equation}
Then $\rho$ is $q$-compressible.
\end{theorem}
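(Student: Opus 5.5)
We first move $W$ into standard position. Choose $g_0\in\SO(N)$ with $g_0 W=\Span(e_1,\ldots,e_q)$; when $q<N$ such a rotation exists because $\SO(N)$ acts transitively on $q$-dimensional subspaces. Conjugation by $g_0$ is homotopic to the identity through conjugation by a path from $g_0$ to $1$ in the connected group $\SO(N)$. Hence it suffices to compress $g_0\rho g_0^{-1}$, and we may assume $W=\Span(e_1,\ldots,e_q)$. Then $K$ fixes $e_1,\ldots,e_q$, so $\rho(K)$ lies in the fiber $\SO(N-q)=p_q^{-1}(\mathbf e)$ of \eqref{eq:stiefel-fibration}.

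Consider $f=p_q\circ\rho\colon G\to V_{N,q}$, that is, $f(g)=(\rho(g)e_1,\ldots,\rho(g)e_q)$. For $k\in K$ we have $f(gk)=\rho(g)\rho(k)\mathbf e=\rho(g)\mathbf e=f(g)$. So $f$ factors as $\bar f\circ\pi$, where $\pi\colon G\to G/K$ and $\bar f\colon G/K\to V_{N,q}$. The space $G/K$ is a closed smooth manifold, hence a finite CW complex of dimension $\dim(G/K)\le N-q-1$. By \eqref{eq:stiefel-connectivity}, $V_{N,q}$ is $(N-q-1)$-connected, so by cellular obstruction theory (or Whitehead's theorem: a map from a CW complex of dimension $\le n$ into an $n$-connected space is null) $\bar f$ is homotopic to the constant map at $\mathbf e$. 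Composing with $\pi$ gives a homotopy $H\colon G\times[0,1]\to V_{N,q}$ from $f=p_q\circ\rho$ to the constant map $\mathbf e$.

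Next we apply the homotopy lifting property of the fiber bundle \eqref{eq:stiefel-fibration}. Since $G$ is a compact manifold, hence paracompact and a CW complex, the fiber bundle $p_q$ has the homotopy lifting property for $G$. Lifting $H$ with initial lift $\rho$ gives $\tilde H\colon G\times[0,1]\to\SO(N)$ with $\tilde H_0=\rho$ and $p_q\circ\tilde H_1=\mathbf e$. This means $\tilde H_1(G)\subset p_q^{-1}(\mathbf e)=\SO(N-q)$, so $\rho$ is $q$-compressible.

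The only delicate point is the null-homotopy step. We need the dimension bound to be exactly the connectivity range, which \eqref{eq:dimension-condition} provides. We also need to pass to $G/K$ rather than $G$, because $\dim G$ itself is typically too large. The rest is routine; note that the homotopy is of plain maps, not homomorphisms, consistent with the definition of compressibility.
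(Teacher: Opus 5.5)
Your proof is correct and follows the paper's argument. You factor the frame map $g\mapsto\rho(g)\mathbf e$ through $G/K$, null-homotope it using the $(N-q-1)$-connectivity of $V_{N,q}$, and lift the null-homotopy through $p_q$ so the time-one map lands in $\SO(N-q)$. The only difference is cosmetic: you put $W$ in standard position by conjugating $\rho$ along a path from $g_0$ to $1$ in $\SO(N)$, whereas the paper keeps $\rho$ and joins $\mathbf e$ to a frame $\mathbf w$ of $W$ by a path in $V_{N,q}$.
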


\begin{proof}
Choose an ordered orthonormal frame $\mathbf w=(w_1,\ldots,w_q)$ in $W$.  Define
\[
  F_{\mathbf w}\colon G\longrightarrow V_{N,q},
  \qquad
  F_{\mathbf w}(g)=\rho(g)\mathbf w.
\]
For $k\in K$,
\[
  F_{\mathbf w}(gk)
  =\rho(g)\rho(k)\mathbf w
  =\rho(g)\mathbf w
  =F_{\mathbf w}(g),
\]
because $K$ fixes $W$ pointwise.  Hence $F_{\mathbf w}$ factors as
\[
  G\longrightarrow G/K
  \xrightarrow{\ \bar\rho_{\mathbf w}\ }V_{N,q}.
\]
The homogeneous space $G/K$ is a finite CW complex of dimension $\dim(G/K)$.
By \eqref{eq:dimension-condition} and \eqref{eq:stiefel-connectivity},
$\bar\rho_{\mathbf w}$ is null-homotopic, and hence so is
$F_{\mathbf w}$.

We now compare the fixed frame $\mathbf w$ with the standard frame used in
\eqref{eq:stiefel-fibration}.  Since $q<N$, the Stiefel manifold $V_{N,q}$ is
connected.  Choose a path of frames $\mathbf a_t$ from
$\mathbf a_0=\mathbf e$ to $\mathbf a_1=\mathbf w$.  Then
\[
  (g,t)\longmapsto \rho(g)\mathbf a_t
\]
is a homotopy from
\[
  p_q\circ\rho(g)=\rho(g)\mathbf e
\]
to $F_{\mathbf w}(g)$.  Since $F_{\mathbf w}$ is null-homotopic, and a constant
frame can be joined to $\mathbf e$ inside $V_{N,q}$, we obtain a null-homotopy
\[
  H\colon G\times I\longrightarrow V_{N,q}
\]
with
\[
  H(g,0)=p_q(\rho(g)),\qquad H(g,1)=\mathbf e.
\]

Lift $H$ through \eqref{eq:stiefel-fibration}, using $\rho$ as the prescribed lift
at time $0$.  The homotopy lifting property gives
\[
  \widetilde H\colon G\times I\longrightarrow\SO(N),
  \qquad
  \widetilde H(g,0)=\rho(g),
  \qquad
  p_q\circ\widetilde H=H.
\]
At time $1$,
\[
  p_q(\widetilde H(g,1))=\mathbf e,
\]
so $\widetilde H(g,1)$ lies in the fiber
$p_q^{-1}(\mathbf e)=\SO(N-q)$.  Thus $\rho$ is homotopic into
$\SO(N-q)$.
\end{proof}

\begin{remark}\label{rem:ordinary-homotopy}
The terminal map in \cref{thm:fixed-frame} need not be a group homomorphism, and
the homotopy need not pass through representations.  The conclusion concerns the
ordinary homotopy class of $\rho$.  This is precisely the level needed for
clutching over spheres.
\end{remark}

\section{Adjoint compression}\label{sec:adjoint}

Let $G$ be a $d$-dimensional compact connected Lie group of rank $r$ with Lie algebra $\mathfrak g$. Choose an $\Ad(G)$-invariant inner product on $\mathfrak g$.  Since $G$ is
connected, the adjoint action is orientation preserving, so
\[
  \Ad\colon G\longrightarrow\SO(\mathfrak g)\cong\SO(d).
\]

\begin{theorem}\label{thm:adjoint-compression}
The adjoint map is homotopic, as a map of spaces, to a map with values in
\[
  \SO(d-r+1)\subset\SO(d).
\]
Equivalently, it is $(r-1)$-compressible.
\end{theorem}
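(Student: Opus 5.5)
The plan is to apply \cref{thm:fixed-frame} with $\rho=\Ad$, $N=d$, and $q=r-1$. The natural choice is $K=T$, a maximal torus of $G$ with Lie algebra $\mathfrak t\subset\mathfrak g$. Since $T$ is abelian, $\Ad(t)$ is the identity on $\mathfrak t$ for every $t\in T$. So $T$ fixes pointwise the $r$-dimensional subspace $\mathfrak t$, and therefore also any $q$-dimensional subspace $W\subset\mathfrak t$. With $K=T$ we have $\dim(G/T)=d-r$. Condition \eqref{eq:dimension-condition} then reads $d-r\le d-q-1$, which is exactly $q\le r-1$. Taking $q=r-1$ and $W$ any $(r-1)$-dimensional subspace of $\mathfrak t$ gives $(r-1)$-compressibility, that is, a homotopy of $\Ad$ into $\SO(d-(r-1))=\SO(d-r+1)$.

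It remains to check the side hypothesis $1\le q<N$ and to treat the degenerate cases separately.
\begin{itemize}
\item If $r=1$, then $q=0$ and the statement is vacuous: $\Ad$ already lands in $\SO(d)=\SO(d-r+1)$.
\item If $r\ge2$, then $q=r-1\ge1$, and $q<d$ holds because $r\le d$.
\item If $r=d$ (so $G$ is a torus), then $\Ad$ is constant, equal to the identity, and hence trivially lies in $\SO(1)=\{1\}$.
\item If $r=0$, then $G$ is trivial and there is nothing to prove.
\end{itemize}
Once these cases are set aside, the theorem is an immediate corollary of \cref{thm:fixed-frame}.

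The only step needing care is the fact that $T$ acts trivially on $\mathfrak t$ through $\Ad$. This holds because for $t\in T$ and $X\in\mathfrak t$ we have $t\exp(sX)t^{-1}=\exp(sX)$, since $T$ is abelian; differentiating in $s$ gives $\Ad(t)X=X$. We also use that $G/T$ is a closed manifold of dimension $d-r$, which \cref{thm:fixed-frame} already treats as a finite CW complex. There is no real obstacle here. The one place a careless reading could slip is the off-by-one: the full $r$-dimensional $\mathfrak t$ cannot be used, because the dimension condition would require $d-r\le d-r-1$. This is why the compression is by $r-1$ rather than $r$.
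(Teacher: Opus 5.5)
Your proposal is correct and matches the paper's proof: apply \cref{thm:fixed-frame} with $K=T$ a maximal torus, $W$ an $(r-1)$-dimensional subspace of $\mathfrak t$ and $q=r-1$, so that the dimension condition holds with equality. Your separate treatment of the torus case $r=d$ is redundant, since for $r\ge2$ the hypothesis $q<N$ already holds. It is harmless, though.
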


\begin{proof}
For $r=1$ there is nothing to prove.  Suppose $r\ge2$, and let $T\subset G$ be a
maximal torus with Lie algebra $\mathfrak t$.  Since $T$ is abelian, conjugation by
an element of $T$ restricts to the identity on $T$, and therefore $T$ fixes
$\mathfrak t$ pointwise under the adjoint action.  Choose an $(r-1)$-dimensional
subspace of $\mathfrak t$ and apply \cref{thm:fixed-frame} with
\[
  N=d,\qquad K=T,\qquad q=r-1.
\]
The dimension condition is an equality:
\[
  \dim(G/T)=d-r=d-(r-1)-1=N-q-1.
\]
\end{proof}

The loss of one fixed direction is exactly what is required by the connectivity
range: using all $r$ directions of $\mathfrak t$ would ask for the false inequality
$d-r\le d-r-1$.

\section{Adjoint bundles and tangent spheres}\label{sec:bundles-spheres}

We first recall the clutching consequence of \cref{thm:adjoint-compression}.  Let
$P\to S^m$ be a principal $G$-bundle, with clutching map
\[
  f\colon S^{m-1}\longrightarrow G.
\]
For any representation $\rho\colon G\to\SO(V)$, the associated vector bundle
$P\times_\rho V$ has clutching map $\rho\circ f$.  In particular,
\[
  \ad(P):=P\times_{\Ad}\mathfrak g
\]
has clutching map $\Ad\circ f$.

By \cref{thm:adjoint-compression}, there is a map
\[
  c\colon G\longrightarrow\SO(d-r+1)
\]
such that, after composing with the standard block inclusion
$j\colon\SO(d-r+1)\hookrightarrow\SO(d)$,
\[
  \Ad\simeq j\circ c.
\]
Precomposing the homotopy with $f$ gives the full clutching picture
\[
  S^{m-1}\xrightarrow{\ f\ }G\xrightarrow{\ \Ad\ }\SO(d)
  \qquad\simeq\qquad
  S^{m-1}\xrightarrow{\ f\ }G\xrightarrow{\ c\ }\SO(d-r+1)
  \xrightarrow{\ j\ }\SO(d).
\]
Homotopic clutching maps define isomorphic vector bundles.  Since $j$ fixes the
last $r-1$ coordinate vectors, those coordinates glue trivially.  We obtain the
following consequence.

\begin{corollary}\label{cor:adjoint-splitting}
For every principal $G$-bundle $P\to S^m$ there is an oriented real vector bundle
$\xi$ of rank $d-r+1$ such that
\[
  \ad(P)\cong \xi\oplus\eps^{r-1}.
\]
In particular, $\ad(P)$ has at least $r-1$ everywhere linearly independent
sections.
\end{corollary}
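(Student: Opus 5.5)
The plan is to make the clutching discussion preceding the statement precise. Fix a principal $G$-bundle $P\to S^m$. For $m\ge 1$, write $S^m=D^m_+\cup D^m_-$ with $D^m_+\cap D^m_-=S^{m-1}$, and trivialize $P$ over each hemisphere. The transition function is then a map $f\colon S^{m-1}\to G$. The associated bundle $\ad(P)$ is obtained by gluing $D^m_+\times\mathfrak g$ and $D^m_-\times\mathfrak g$ along $\Ad\circ f\colon S^{m-1}\to\SO(\mathfrak g)$. After choosing an orthonormal basis of $\mathfrak g$ in which the standard $\SO(d-r+1)\subset\SO(d)$ is the one used in \cref{thm:adjoint-compression}, we get a clutching map to $\SO(d)$. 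The case $m=0$ is trivial, because every bundle over a discrete space is trivial, and $d\ge r-1$.

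The first step is to invoke \cref{thm:adjoint-compression}. This gives $c\colon G\to\SO(d-r+1)$ and a homotopy $\Ad\simeq j\circ c$. Precomposing with $f$ gives a homotopy $\Ad\circ f\simeq j\circ(c\circ f)$ of maps $S^{m-1}\to\SO(d)$.

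The second step is the standard fact that homotopic clutching maps produce isomorphic vector bundles. To see this, take a homotopy $h_t$ and glue $D^m_+\times\R^d$ to $D^m_-\times\R^d$ using $h_t$ on a collar. Alternatively, use the fact that clutching over $S^m$ depends only on the homotopy class, by pulling back a bundle over $S^m\times I$ together with homotopy invariance of pullbacks. Either way, $\ad(P)$ is isomorphic to the bundle $E$ clutched by $j\circ(c\circ f)$.

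The third step decomposes $E$. The map $j(A)=A\oplus I_{r-1}$ preserves the splitting $\R^d=\R^{d-r+1}\oplus\R^{r-1}$. The gluing of $E$ therefore respects the decomposition $D^m_\pm\times\R^d=(D^m_\pm\times\R^{d-r+1})\oplus(D^m_\pm\times\R^{r-1})$. The first summands glue via $c\circ f$ into a bundle $\xi$ of rank $d-r+1$. Since $c\circ f$ takes values in $\SO(d-r+1)$, the bundle $\xi$ is oriented. The second summands glue via the identity, which gives $\eps^{r-1}$. Hence $\ad(P)\cong\xi\oplus\eps^{r-1}$. The constant basis vectors $e_{d-r+2},\dots,e_d$ define global sections of $\eps^{r-1}$, and these give $r-1$ everywhere independent sections of $\ad(P)$.

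The only genuinely delicate point is the choice of basis identifying $\mathfrak g$ with $\R^d$. The compression in \cref{thm:adjoint-compression} is stated for the standard block subgroup, so the identification used for clutching must match the one used there. It also has to be oriented, so that $\SO$ is preserved. Once that identification is fixed consistently, the remaining steps are routine.
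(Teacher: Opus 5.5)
Your proposal is correct and follows the paper's argument: you precompose the homotopy $\Ad\simeq j\circ c$ from \cref{thm:adjoint-compression} with the clutching map $f$, use that homotopic clutching maps give isomorphic bundles, and observe that the coordinates fixed by $j$ glue to $\eps^{r-1}$. Your concern about an oriented basis is harmless but unnecessary, because any orthonormal basis of $\mathfrak g$ identifies $\SO(\mathfrak g)$ with $\SO(d)$.
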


To apply this to tangent bundles, we use Adams's vector-field theorem \cite{Adams1962}; see also
\cite{Thomas1969}.

\begin{theorem}[Adams]\label{thm:adams}
Write a positive integer $m$ uniquely in the form
\[
  m=(2a+1)2^{4c+b},\qquad 0\le b\le3,
\]
and define the Radon--Hurwitz number by $\rho(m)=8c+2^b$.
Then
\[
  \Span(S^{m-1})=\rho(m)-1.
\]
\end{theorem}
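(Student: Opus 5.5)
The paper cites this result from \cite{Adams1962} rather than proving it, so what follows is a plan for the classical proof. Two inequalities are needed, $\Span(S^{m-1})\ge\rho(m)-1$ and $\Span(S^{m-1})\le\rho(m)-1$. The first is an explicit linear construction; the second is the deep part.

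\emph{Lower bound (Hurwitz--Radon--Eckmann).} I would produce $\rho(m)-1$ real $m\times m$ matrices $A_1,\dots,A_{\rho(m)-1}$ satisfying
\[
  A_i^{T}=-A_i,\qquad A_i^2=-I,\qquad A_iA_j=-A_jA_i\ (i\ne j).
\]
Equivalently, I want an action of the Clifford algebra $C_{\rho(m)-1}$ (generators squaring to $-1$) on $\R^m$, and I would take the inner product to be invariant so that the generators act by orthogonal matrices. Such an action exists exactly when $\R^m$ is a sum of irreducible $C_{k}$-modules. The dimensions of these irreducible modules follow from the periodic classification of Clifford algebras ($8$-fold Bott periodicity), and they give precisely that $k\le\rho(m)-1$ is achievable. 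Then $x\mapsto A_ix$ are tangent vector fields on $S^{m-1}$, since $\langle A_ix,x\rangle=0$ by skew-symmetry. They are orthonormal at every point because $\langle A_ix,A_jx\rangle=-\tfrac12\langle (A_iA_j+A_jA_i)x,x\rangle=\delta_{ij}$.

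\emph{Upper bound (Adams).} Suppose $S^{m-1}$ carried $\rho(m)$ independent fields. I would first use James's reduction. Independent fields on $S^{m-1}$ give a cross-section of the Stiefel fibration $V_{m,k+1}\to S^{m-1}$, with $k=\rho(m)$. By James's theory this yields a map $\R P^{m-1}/\R P^{m-k-2}\to S^{m-1}$ of degree one on the top cell, so the stunted projective space would be coreducible. Next I would reduce to the critical case $m=2^{s}$ times an odd number, and by stability to the specific pair $\R P^{m+\rho(m)}/\R P^{m-1}$. S-duality (Atiyah) then converts coreducibility into reducibility of a dual stunted projective space, namely a Thom space of a multiple of the Hopf line bundle. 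This becomes a statement that a certain multiple of the reduced Hopf bundle class is fibre-homotopy trivial.

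\emph{Main obstacle.} The hard step is to compute $\widetilde{KO}(\R P^{n})$, which is cyclic of order $2^{\phi(n)}$ with $\phi(n)=\#\{1\le i\le n: i\equiv 0,1,2,4\pmod 8\}$. From this I would obtain the Adams operations $\psi^{k}$ on it and show that the required reducibility contradicts the action of $\psi^{3}$ (or the $J$-homomorphism computation). Concretely, if the Thom space were reducible, the Thom class would satisfy $\psi^k(U)=k^{n}U$ up to units. Evaluating this against $\psi^k(\lambda)$ for the Hopf class $\lambda$, together with the exact order $2^{\phi}$, gives a divisibility condition that fails exactly when the number of fields reaches $\rho(m)$. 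I would first verify the $KO$ computation via the Atiyah--Hirzebruch spectral sequence and the representation-ring description of $KO(\R P^n)$; getting the power of two exactly right here is where I expect all the difficulty to lie.
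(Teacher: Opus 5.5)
The paper does not prove this theorem; it only cites Adams (and Thomas). So the comparison is with the classical argument, whose overall architecture your outline follows. Your Clifford-module lower bound is correct as written. The upper-bound chain, however, has its key directions reversed, and as literally stated it never gets started.

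\emph{The James step.} A map $\R P^{m-1}/\R P^{m-k-2}\to S^{m-1}$ of degree one on the top cell always exists: it is the collapse onto the top cell. So that assertion carries no information. Also, \emph{coreducible} refers to splitting off the \emph{bottom} cell, here $S^{m-k-1}$, not the top one. What James's argument actually produces is a map in the other direction:
\begin{itemize}
\item In James's cell structure, $\R P^{m-1}/\R P^{m-k-2}$ is a subcomplex of $V_{m,k+1}$ containing every cell of dimension $\le 2(m-k-1)$.
\item The projection $V_{m,k+1}\to S^{m-1}$ restricts on this subcomplex to the top-cell collapse.
\item You may assume $m\ge 2k+1$ by replacing $m$ with $tm$, $t$ odd. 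This works because $k$ fields on $S^{m-1}$ give $k$ fields on $S^{tm-1}$, and $\rho(tm)=\rho(m)$. This is the honest form of your ``reduce to the critical case''.
\item With $m\ge 2k+1$, cellular approximation compresses the section to a map $S^{m-1}\to\R P^{m-1}/\R P^{m-k-2}$ whose composite with the collapse has degree one.
\end{itemize}
That is \emph{reducibility}: the top cell splits off.

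\emph{The reversal propagates through the next steps.}
\begin{itemize}
\item S-duality exchanges reducible and coreducible. The dual of $\R P^{m-1}/\R P^{m-k-2}$ is $\R P^{M+k}/\R P^{M-1}$, the Thom space of $M\xi$ over $\R P^{k}$. Here $M=N-m$ with $N$ divisible by a large power of $2$, so $\nu_2(M)=\nu_2(m)$ and $\rho(M)=\rho(m)$. What you learn is that this Thom space is \emph{coreducible}.
\item It is coreducibility of a Thom space, not reducibility, that is equivalent (Atiyah) to stable fibre-homotopy triviality, i.e.\ to $J(M\xi)=0$ in $\tilde J(\R P^{k})$. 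Reducibility implies nothing of the kind: the Thom space of the normal bundle of any closed manifold is reducible by the Pontryagin--Thom construction, whether or not that bundle is fibre-homotopy trivial.
\item Your $\psi$-operation step needs the coreduction too. A map $f\colon\R P^{M+k}/\R P^{M-1}\to S^{M}$ pulls a generator of $\widetilde{KO}(S^{M})$ back to a class that restricts to a generator on the bottom cell. That class is an honest eigenvector of every $\psi^{j}$, with eigenvalue $j^{M/2}$ when $4\mid M$. A map $S^{M+k}\to\R P^{M+k}/\R P^{M-1}$ puts no such constraint on $\widetilde{KO}$ of the stunted space.
\end{itemize}

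Once these directions are fixed, the endpoint is the one you intend. The $\psi^{j}$ computation on $\widetilde{KO}(\R P^{k})\cong\mathbb Z/2^{\phi(k)}$ shows that $J(M\xi)=0$ forces $2^{\phi(k)}\mid M$. For $k=\rho(m)$ one has $\phi(\rho(m))=\nu_2(m)+1>\nu_2(M)$, which is the required contradiction.
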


\begin{theorem}\label{thm:rank-obstruction}
Let $G$ be a $d$-dimensional compact connected Lie group of rank $r$.  If
$TS^d$ admits a reduction through
\[
  \Ad\colon G\longrightarrow\SO(d),
\]
then
\[
  r\le\rho(d+1).
\]
\end{theorem}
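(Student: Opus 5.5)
The plan is to translate the hypothetical reduction into a statement about the adjoint bundle of a principal $G$-bundle over $S^d$. Then \cref{cor:adjoint-splitting} produces $r-1$ independent vector fields, and Adams's theorem bounds how many there can be.

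First I unpack the hypothesis. A reduction of $TS^d$ through $\Ad$ means a principal $G$-bundle $P\to S^d$ together with an isomorphism of oriented vector bundles
\[
  TS^d\cong P\times_{\Ad}\mathfrak g=\ad(P).
\]
Here we use an $\Ad$-invariant inner product on $\mathfrak g$, so this is a rank-$d$ oriented bundle. Applying \cref{cor:adjoint-splitting} with $m=d$ gives
\[
  TS^d\cong\ad(P)\cong\xi\oplus\eps^{r-1}.
\]
Hence $TS^d$ has $r-1$ everywhere linearly independent sections, that is, $\Span(S^d)\ge r-1$.

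Next I apply \cref{thm:adams} with $m=d+1$, which gives
\[
  \Span(S^{d})=\rho(d+1)-1.
\]
Combining the two statements yields $r-1\le\rho(d+1)-1$, so $r\le\rho(d+1)$.

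I should check the edge cases. For $r=1$ the inequality is automatic, since $\rho\ge1$. If $d=0$ the group is trivial, $r=0$, and the bound is vacuous.

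I expect the only real subtlety to be the first step: making sure that ``reduction through $\Ad$'' means exactly an isomorphism $TS^d\cong\ad(P)$ for some principal $G$-bundle $P$ over $S^d$. Since $G$ is connected, such bundles over a sphere are classified by clutching maps $S^{d-1}\to G$. The structure group of $TS^d$ is $\SO(d)$, and extending the structure group of $P$ along $\Ad$ gives exactly $\ad(P)$. So this step is just the definition.

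Finally, the inequality $\Span(S^d)\ge r-1$ uses only linear independence of the sections. Orthonormality or exact trivial summands are not needed, and Adams's theorem is already stated in terms of the span.
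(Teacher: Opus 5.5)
Your proposal is correct and follows the paper's proof step for step: you identify the reduction with an isomorphism $TS^d\cong\ad(P)$ for a principal $G$-bundle $P\to S^d$, split off $\eps^{r-1}$ via \cref{cor:adjoint-splitting}, and compare with Adams's $\Span(S^d)=\rho(d+1)-1$. Your separate check of the degenerate cases $r\le 1$, where the splitting corollary is vacuous or (for $r=0$) not literally meaningful, is a small, harmless addition that the paper leaves implicit.
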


\begin{proof}
An adjoint reduction means that for some principal $G$-bundle $P\to S^d$,
\[
  TS^d\cong P\times_{\Ad}\mathfrak g=\ad(P).
\]
By Corollary~\ref{cor:adjoint-splitting}, the tangent bundle splits off
$\eps^{r-1}$.  Hence $S^d$ has at least $r-1$ everywhere independent tangent
vector fields.  \Cref{thm:adams} gives
\[
  r-1\le\rho(d+1)-1.
\]
\end{proof}

For a first general consequence we use two elementary facts about compact
connected Lie groups.  First,
\begin{equation}\label{eq:rank-parity}
  \dim G-\rank G\quad\text{is even},
\end{equation}
because the nonzero roots occur in real two-dimensional root spaces  \cite[Chapter 5, Proposition 2.7]{BrockerTomDieck1985}.  Second, a
compact connected Lie group of rank one has dimension $1$ or $3$: its Lie algebra
is either one-dimensional abelian or has semisimple part of rank one, hence is
$\mathfrak{su}(2)$.  See, for example, \cite[Chapter 5, Corollary 1.6]{BrockerTomDieck1985}.

\begin{corollary}\label{cor:mod-four}
Let $G$ be a nontrivial compact connected Lie group of dimension $d>1$.  If $d\not\equiv3\pmod4$, 
then $TS^d$ admits no reduction through the adjoint representation of $G$.
\end{corollary}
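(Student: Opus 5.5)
The plan is to combine \cref{thm:rank-obstruction} with the two elementary facts recorded just before the statement, namely the parity relation \eqref{eq:rank-parity} and the classification of rank-one groups. Write $d=\dim G$ and $r=\rank G$. Since $G$ is nontrivial, compact and connected, it has positive dimension and so contains a nontrivial maximal torus; hence $r\ge1$. Suppose, for a contradiction, that $TS^d$ reduces through $\Ad$. Then \cref{thm:rank-obstruction} gives $r\le\rho(d+1)$. I will split according to the residue of $d$ modulo $4$; the cases allowed by the hypothesis are $d$ even and $d\equiv1\pmod 4$.

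\emph{Case $d$ even.} Here $d+1$ is odd. In the notation of \cref{thm:adams} this means $a$ is arbitrary and $c=b=0$, so $\rho(d+1)=1$ and therefore $r\le1$. On the other hand, \eqref{eq:rank-parity} forces $r\equiv d\equiv0\pmod2$. Combined with $r\ge1$ this gives $r\ge2$, a contradiction. Equivalently, \cref{cor:adjoint-splitting} would supply a nowhere-vanishing vector field on an even-dimensional sphere, which is impossible because $\chi(S^d)=2$.

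\emph{Case $d\equiv1\pmod4$.} Here $d+1\equiv2\pmod4$, so $d+1=(2a+1)\cdot2$. Thus $b=1$ and $c=0$, which gives $\rho(d+1)=2$ and hence $r\le2$. Parity \eqref{eq:rank-parity} makes $r$ odd, so $r=1$. A rank-one compact connected group has dimension $1$ or $3$. Dimension $1$ is excluded by $d>1$, and dimension $3$ is excluded because $3\not\equiv1\pmod4$. This is a contradiction.

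There is no real obstacle in this argument. The only points needing care are computing $\rho$ correctly from the $2$-adic valuation of $d+1$, and justifying $r\ge1$ for a nontrivial group. That ensures the rank-one classification is what rules out the borderline value $r=1$ in the case $d\equiv1\pmod4$.
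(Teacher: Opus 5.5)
Your proposal is correct and follows essentially the same route as the paper: split into $d$ even (where $\rho(d+1)=1$ but parity forces $r\ge2$) and $d\equiv1\pmod4$ (where $\rho(d+1)=2$ but parity and the rank-one classification force $r\ge3$), then contradict \cref{thm:rank-obstruction}. You also make explicit two points the paper leaves tacit: nontriviality gives $r\ge1$, and $d\equiv1\pmod4$ rules out the rank-one dimension $3$.
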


\begin{proof}
Put $r=\rank G$.  If $d$ is even, then \eqref{eq:rank-parity} makes $r$ even, so
$r\ge2$, and $\rho(d+1)=1$.  If
$d\equiv1\pmod4$, then $r$ is odd.  Since $d>1$, the rank-one case is impossible,
so $r\ge3$, whereas $\rho(d+1)=2$.  Both cases contradict
\cref{thm:rank-obstruction}.
\end{proof}

In particular, for the compact group of type $E_7$,
\[
  \dim E_7=133\equiv1\pmod4,
\]
so $TS^{133}$ admits no reduction through its adjoint representation. Any reduction to a subgroup of the adjoint copy of $E_7$ would extend to a reduction to that adjoint
$E_7$ and is therefore impossible as well.

The same obstruction is naturally compatible with products.  Up to finite cover,
a compact connected Lie group has the form
\[
  T^z\times G_1\times\cdots\times G_s,
\]
where the $G_i$ are compact simply connected simple groups \cite[Chapter V, Theorem 8.1]{BrockerTomDieck1985}.  Writing
$d_i=\dim G_i$ and $r_i=\rank G_i$, \cref{thm:rank-obstruction} gives the necessary
condition
\[
  z+\sum_i r_i
  \le
  \rho\!\left(z+\sum_i d_i+1\right).
\]
Thus the criterion applies without irreducibility assumptions and is particularly
natural for nonsimple groups.

As a final consequence, we see that adjoint reducibility of the corresponding tangent sphere is not preserved under taking direct powers: even if $TS^{\dim{G}}$ admits a reduction through the adjoint representation of $G$, the tangent bundles $TS^{m\dim G}$ cannot admit reductions through the adjoint representation of $G^m$ for all $m$.

\begin{corollary}\label{cor:large-powers}
Let $G$ be a fixed nontrivial compact connected Lie group.  For all sufficiently
large $m$, the tangent bundle $TS^{m\dim G}$ does not admit a reduction through the
adjoint representation of $G^m$.
\end{corollary}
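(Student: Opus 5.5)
The plan is to apply \cref{thm:rank-obstruction} to the group $G^m$ and show that its rank grows linearly in $m$ while the Radon--Hurwitz bound grows only logarithmically. Put $d=\dim G$ and $r=\rank G$. Since $G$ is nontrivial and connected, $d\ge1$ and $r\ge1$, because every nontrivial compact connected Lie group contains a nontrivial maximal torus. The group $G^m$ is compact and connected, with
\[
  \dim G^m=md,\qquad \rank G^m=mr,
\]
since a maximal torus of $G^m$ is $T^m$ for a maximal torus $T\subset G$. Moreover, $\Ad_{G^m}$ is the product of the adjoint representations on $\mathfrak g^{\oplus m}$, and it is orthogonal for the product invariant inner product. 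So if $TS^{md}$ admits a reduction through $\Ad_{G^m}$, then \cref{thm:rank-obstruction} gives
\[
  mr\le\rho(md+1).
\]

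Next I will bound $\rho$ from above. Write $n=(2a+1)2^{4c+b}$ as in \cref{thm:adams}, with $0\le b\le 3$. Then $2^{4c+b}\le n$, so $4c+b\le\log_2 n$. Hence
\[
  \rho(n)=8c+2^b\le 2(4c+b)+2^b-2b\le 2\log_2 n+1,
\]
where the last step uses $2^b-2b\le1$ for $0\le b\le3$. Applied to $n=md+1$, this yields $\rho(md+1)\le 2\log_2(md+1)+1$.

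It remains to compare the two sides. Since $r\ge1$, a reduction would force $m\le mr\le 2\log_2(md+1)+1$. The right side is $O(\log m)$, so this inequality fails for all sufficiently large $m$, and the explicit threshold depends only on $d$. For such $m$ no reduction exists.

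The only point needing care is the identification of $\rank G^m$ and of the adjoint representation of the product, and both are standard. The proof has no genuine obstacle beyond the elementary logarithmic bound on $\rho$.
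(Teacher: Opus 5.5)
Your argument is the same as the paper's. You apply Theorem~\ref{thm:rank-obstruction} to $G^m$, which has dimension $md$ and rank $mr$. You then compare the linear growth of $mr$ with a logarithmic upper bound on $\rho(md+1)$.

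There is one slip, in the step $2^b-2b\le 1$: it fails at $b=3$, where $2^b-2b=2$. The resulting bound $\rho(n)\le 2\log_2 n+1$ is in fact false, since $\rho(8)=8>7=2\log_2 8+1$. The correct estimate is $\rho(n)\le 2\log_2 n+2$, which is what the paper uses. With that constant your comparison $m\le 2\log_2(md+1)+2$ still fails for large $m$, and the rest of your proof goes through unchanged.
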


\begin{proof}
Write $d_0=\dim G$ and $r_0=\rank G$.  Such a reduction would require
\[
  mr_0\le\rho(md_0+1).
\]
The Radon--Hurwitz function satisfies
\[
  \rho(n)\le 2\log_2 n+2
\]
for every positive integer $n$.  Hence
\[
  \rho(md_0+1)\le 2\log_2(md_0+1)+2.
\]
The left-hand side of the required inequality grows linearly in $m$, whereas
the right-hand side grows only logarithmically.  The inequality therefore fails
for all sufficiently large $m$.
\end{proof}

\subsection{Relation with classical \texorpdfstring{$G$}{G}-structure results}

The preceding statements should not be read as novelty claims for every individual
simple group.  Leonard's work gives broad non-existence results for proper reductions
of the tangent structure group, including the real even-dimensional case apart from
a low-dimensional exception; Ozaki sharpened several results for proper subgroups of
the classical tangent groups \cite{Leonard1971,Ozaki1991}.  \v{C}adek and Crabb
later classified reductions through homomorphisms whose source and target are among
$\SO(k)$, $\SU(k)$ and $\Sp(k)$, and in particular ruled out the relevant
irreducible classical-source representations except for their listed exceptions
\cite{CadekCrabb2006}.

For bundles over suspensions, \v{C}adek and Crabb also prove a general
compression statement: if a reduction factors through
$\rho\colon G\to\SO(n)$ and $\dim G\le n-j-1$, then it further reduces to the
standard subgroup $\SO(n-j)$ \cite[Proposition~3.1]{CadekCrabb2006}.  The
fixed-frame argument of \cref{thm:fixed-frame} replaces $\dim G$ by the smaller
homogeneous-space dimension $\dim(G/K)$ whenever a subgroup $K$ fixes the relevant
frame pointwise.  This is the feature that makes the adjoint application effective
with a maximal torus.

Our focus is thus different: the source is an arbitrary compact connected Lie group, but
the homomorphism is specifically the adjoint map. The maximal torus then supplies fixed directions uniformly, so the Stiefel argument
reduces to the single numerical obstruction of \cref{thm:rank-obstruction}.  This
is weaker than the strongest classical classification theorems in many particular
families, but it applies without assuming that the source is classical or simple.
The exceptional $E_7$ case is especially useful because it is not covered by the
classical-source classification and is exactly the alternative left in the
$133$-dimensional argument of \cite{BorEtAl2021}.

\section{The \texorpdfstring{$133$}{133}-dimensional application}\label{sec:banach}

We briefly explain where the preceding obstruction enters the geometric argument of
Bor, Hern\'andez-Lamoneda, Jim\'enez-Desantiago and Montejano.  Their topological
classification contains the following statement.

\begin{theorem}[Bor et al.]
\label{thm:bor-structure}
Let $n\equiv1\pmod4$, $n\ge5$, and suppose that the structure group of $TS^n$
reduces to a closed connected subgroup $G\subset\SO(n)$.
\begin{enumerate}[label=\textup{(\alph*)},leftmargin=2.2em]
\item If $G$ is reducible, then it is conjugate to a subgroup of the standard
$\SO(n-1)\subset\SO(n)$ and acts transitively on $S^{n-2}$.
\item If $G$ is irreducible, then either $G=\SO(n)$, or $n=133$ and
$G\subset H\subset\SO(133)$, where $H$ is the adjoint group of type $E_7$.
\end{enumerate}
\end{theorem}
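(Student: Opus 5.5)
This theorem is quoted from \cite{BorEtAl2021} and is not reproved here. If I had to reconstruct it, the plan would be to translate a reduction of $TS^n$ into a lift of its clutching map and then split according to whether $G$ is reducible.

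\textbf{Clutching lift.} Since $n\equiv1\pmod4$, the sphere $S^n$ has span exactly $\rho(n+1)-1=1$ by \cref{thm:adams}. The tangent bundle $TS^n$ is clutched by a map $\tau\colon S^{n-1}\to\SO(n)$. A reduction to $G$ means $\tau$ is homotopic to $i\circ f$ for some $f\colon S^{n-1}\to G$, where $i\colon G\hookrightarrow\SO(n)$ is the inclusion.

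\textbf{Reducible case (a).} Let $G$ preserve a proper subspace $V$ of dimension $k$. Then $TS^n\cong\alpha\oplus\beta$ with $\operatorname{rank}\alpha=k$ and $\operatorname{rank}\beta=n-k$.

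\begin{enumerate}[label=\textup{(\arabic*)},leftmargin=2.2em]
\item Invoke the known results on splittings of tangent bundles of spheres (Leonard; for $n$ odd, via $\Span(S^n)=1$ and the characteristic-class constraints). These should force one summand to be a line bundle. It is trivial because $S^n$ is simply connected.
\item It follows that $G$ fixes a unit vector, so after conjugation $G\subset\SO(n-1)$, and $TS^n\cong\xi\oplus\eps^1$.
\item For transitivity, consider the composite
\[
  S^{n-1}\xrightarrow{f}G\to\SO(n-1)\to S^{n-2}.
\]
This is the primary obstruction to a second section of $\xi$. It must be essential, since otherwise $S^n$ would have span at least $2$, contradicting Adams.
\item If $G$ were not transitive on $S^{n-2}$, the orbit map $G\to S^{n-2}$ would miss a point. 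An orbit of full dimension is open and compact, hence all of $S^{n-2}$, so a non-transitive orbit map is not surjective. A non-surjective map into a sphere is null-homotopic, so the composite above would be null-homotopic, which is a contradiction.
\end{enumerate}

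\textbf{Irreducible case (b).} The idea is a dimension count followed by a classification.

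\begin{enumerate}[label=\textup{(\arabic*)},leftmargin=2.2em]
\item If $\dim G\le n-2$, the \v{C}adek--Crabb compression statement \cite[Proposition~3.1]{CadekCrabb2006}, with $j=1$, shows that the reduction further reduces to $\SO(n-1)$.
\item That is a weaker conclusion than $G$ itself being reducible. I would try to promote it using the irreducibility of $G$ together with the essential composite from case (a), but this step may not go through directly.
\item In any case, an irreducible reduction should require $\dim G\ge n-1$.
\item Next, invoke the classification of irreducible real representations $G\to\SO(n)$ of a compact connected group with $\dim G\ge n-1$. Besides $\SO(n)$ itself, these are essentially adjoint representations of simple groups, where $\dim G=n$, plus finitely many small exceptions that can be checked by hand.
\item Run through the simple types with $\dim\mathfrak g\equiv1\pmod4$.
   \begin{itemize}
   \item The classical ones are excluded by \v{C}adek--Crabb's classification of reductions through homomorphisms among $\SO$, $\SU$ and $\Sp$.
   \item Among the exceptional types, $\dim G_2=14$, $\dim F_4=52$, $\dim E_6=78$ and $\dim E_8=248$ are not $\equiv1\pmod4$.
   \item Only $\dim E_7=133$ survives, which gives exactly the listed branch $G\subset H$.
   \end{itemize}
\end{enumerate}

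\textbf{Main obstacle.} I expect the hard step to be the representation-theoretic classification in (b). One must list all irreducible representations whose group dimension is at least the representation dimension minus one, and dispose of the non-adjoint sporadic cases. The topology in (a) and the compression step are comparatively routine.

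In this paper, the surviving $E_7$ branch is then removed by \cref{cor:mod-four}.
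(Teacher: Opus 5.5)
You are right that the paper does not prove this statement. It is imported as \cite[Theorem~1.6]{BorEtAl2021} and only used, so there is no argument in the paper to compare with. Judged as a reconstruction, your part (a) is essentially sound; the orbit-map argument for transitivity is correct. The exception is step (1), which is asserted rather than proved. Characteristic classes cannot prove it, since $H^i(S^n)=0$ for $0<i<n$. It does have an elementary proof over a sphere. Suppose $TS^n\cong\alpha\oplus\beta$ with clutching map $\operatorname{diag}(a,b)$ and $k=\rank\alpha\le\rank\beta$. Then $[\operatorname{diag}(a,b)]=[\operatorname{diag}(a,1)]+[\operatorname{diag}(1,b)]$ in $\pi_{n-1}(\SO(n))$. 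Conjugating the first summand by a fixed element of $\SO(n)$ moves $a$ into the second block, so $TS^n\cong\gamma\oplus\eps^k$. For $k\ge2$ this contradicts $\Span(S^n)=1$.

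Part (b) has genuine gaps.
\begin{enumerate}[label=\textup{(\roman*)},leftmargin=2.2em]
\item Applying \v{C}adek--Crabb with $j=1$ is vacuous, since every odd sphere has a nowhere-zero vector field. The useful choice $j=2$ gives only $\dim G\ge n-2$, so your bound $\dim G\ge n-1$ is unproved. The irreducible $\SO(3)\subset\SO(5)$ sits exactly at $\dim G=n-2$ and needs its own computation. It induces multiplication by $10$ on $\pi_3$, hence zero on $\pi_4(\SO(3))\cong\mathbb Z/2$, while the clutching class of $TS^5$ is nonzero.
\item More seriously, the classification you invoke in (4) is false. For $n\equiv1\pmod4$ there are infinite families of non-adjoint irreducible subgroups with $\dim G>n$. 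One is $\SO(3)\times\SO(b)$ on $\R^3\otimes\R^b$ with $b\equiv3\pmod4$, $b\ge7$, where $n=3b$ and $\dim G=3+b(b-1)/2$. Another is $\Sp(m)$ on $\Lambda^2_0\mathbb C^{2m}$ with $m\equiv2\pmod 4$, $m\ge6$, where $n=2m^2-m-1$ and $\dim G=n+2m+1$.
\item The $\Sp(m)$ family has classical source and can be referred to \v{C}adek--Crabb. The tensor-product groups, however, are not simple, so your check over simple types never sees them. The filter $\dim\mathfrak g\equiv1\pmod4$ only makes sense for adjoint representations anyway. These groups need a separate argument. For instance, apply \cref{thm:fixed-frame} with $K=\SO(a-2)\times\SO(b-1)$, which fixes $e_1\otimes f_1$ and $e_2\otimes f_1$. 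Then $\dim(G/K)=2a+b-4\le ab-3$, which yields $\Span(S^{ab})\ge2$, a contradiction.
\end{enumerate}
Without such steps your argument does not show that the only irreducible survivors are $\SO(n)$ and the adjoint $E_7$ at $n=133$.
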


This is \cite[Theorem~1.6]{BorEtAl2021}.  The exceptional alternative in part (b)
is impossible by Corollary~\ref{cor:mod-four}.  Consequently, in dimension $133$ every
closed connected irreducible reduction occurring in \cref{thm:bor-structure} is the
full group $\SO(133)$.

The rest of the argument of \cite{BorEtAl2021} is geometric.  A family of mutually
linearly equivalent central hyperplane sections gives a reduction of the tangent
structure group to the symmetry group of a normalized model section.  Their
\cref{thm:bor-structure}, together with their characterization of ellipsoids by
affine bodies of revolution, then proves Banach's conjecture in the dimensions they
consider.  The only excluded branch at $n=133$ is the adjoint $E_7$ possibility.
Thus the same argument, with Corollary~\ref{cor:mod-four} inserted at that point, gives:

\begin{corollary}\label{cor:banach133}
Let $V$ be a real normed space with $\dim V>133$.  If all $133$-dimensional linear
subspaces of $V$ are mutually isometric, then $V$ is a Hilbert space.
\end{corollary}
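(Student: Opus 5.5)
The plan is to run the Bor et al.\ argument essentially verbatim, changing only the single step where the $E_7$ alternative of \cref{thm:bor-structure} must be ruled out. Set $n=133$, so $n\equiv1\pmod4$ and $n\ge5$. Take $V$ with $\dim V>133$ such that all $133$-dimensional subspaces are mutually isometric. We may first reduce to finite dimension. It suffices to show that every $(n+1)$-dimensional subspace $U\subset V$ is Euclidean: then the norm satisfies the parallelogram law on every two-dimensional subspace, and the Jordan--von Neumann theorem makes $V$ an inner product space. Completeness is automatic in finite dimension. If the intended conclusion is ``Hilbert'' in the complete sense, the infinite-dimensional case needs a short remark: an inner-product space in which all subspaces of some fixed finite dimension are isometric is Euclidean on each finite piece, and completeness is taken as part of the standing hypotheses of Banach's problem.

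Fix such a $U$ of dimension $n+1$, with unit sphere $S^n$ of some auxiliary Euclidean structure. Following \cite{BorEtAl2021}, consider the central hyperplane sections $U\cap x^\perp$ for $x\in S^n$. These are all linearly isometric to one model $n$-dimensional body $B$. Choosing a normalized position of $B$, for instance via the John ellipsoid, the identifications of the tangent spaces $T_xS^n=x^\perp$ with the model reduce the structure group of $TS^n$ to the symmetry group $\mathrm{Sym}(B)\subset\SO(n)$, or to its identity component $G$ after passing to a connected reduction as they do. Now apply \cref{thm:bor-structure} to $G$.

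\begin{enumerate}[label=\textup{(\roman*)},leftmargin=2.2em]
\item If $G$ is reducible, it acts transitively on $S^{n-2}$. Then $B$ is an affine body of revolution, and the characterization of ellipsoids in \cite{BorEtAl2021} applies.
\item If $G=\SO(n)$, then $B$ is an ellipsoid directly.
\item The remaining case is $G\subset H$ with $H$ the adjoint $E_7$. This would extend to a reduction of $TS^{133}$ to $H$ acting by its adjoint representation, since the irreducible $133$-dimensional representation of $E_7$ is the adjoint one. That is excluded by \cref{cor:mod-four}, because $133\equiv1\pmod4$.
\end{enumerate}

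In every surviving case all hyperplane sections of $U$ are ellipsoids. The classical fact that a convex body all of whose central hyperplane sections are ellipsoids is itself an ellipsoid (for dimension at least $3$) then shows that $U$ is Euclidean.

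The main obstacle is bookkeeping rather than new mathematics. We must confirm that the reduction produced in \cite{BorEtAl2021} is really a reduction through the adjoint homomorphism of $H$, as \cref{thm:rank-obstruction} requires. We also need the non-connected and non-closed subtleties, such as passing to the identity component, to be handled exactly as in their paper, so that inserting \cref{cor:mod-four} fills precisely the gap they left. I would check that the inclusion $H\subset\SO(133)$ coincides, up to conjugacy, with $\Ad$ of adjoint $E_7$. This holds because $E_7$ has a unique irreducible $133$-dimensional representation.
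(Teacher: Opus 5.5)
Your proposal is correct and follows the paper's route: you run the argument of \cite{BorEtAl2021} for $n=133$ unchanged and dispose of the adjoint $E_7$ branch of \cref{thm:bor-structure} by \cref{cor:mod-four}, since rank $7>\rho(134)=2$ contradicts \cref{thm:rank-obstruction}. The extra bookkeeping you add is consistent with the paper's brief sketch: restricting to $134$-dimensional subspaces, passing to the identity component over the simply connected $S^{133}$, and identifying $H\subset\SO(133)$ with $\Ad_H$ are all fine; the one small correction is that $\mathrm{Sym}(B)$ lies in $O(n)$, and only its identity component lies in $\SO(n)$.
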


The same $E_7$ exclusion also removes the corresponding $133$-dimensional
exception wherever the section and projection results in
\cite{BorEtAl2021,MontejanoProj2020,MontejanoECM2023} invoke the same
structure-group alternative.

\section{Further questions}\label{sec:further}

Adams's obstruction rules out broad families of adjoint reductions, but it leaves a natural classification problem in the remaining dimensions.

\begin{problem}\label{prob:classification}
Classify the compact connected Lie groups $G$ for which
\[
  TS^{\dim G}
\]
admits a reduction through the adjoint representation
$\Ad\colon G\to\SO(\dim G)$.
\end{problem}

\Cref{thm:rank-obstruction} gives a necessary condition
 and Corollary~\ref{cor:mod-four} settles all dimensions not congruent to $3$ modulo $4$,
apart from the trivial one-dimensional case.  The remaining range contains the
automatically positive parallelizable dimensions $3$ and $7$, but in higher
dimensions additional unstable obstructions are needed.  It would be interesting to
determine how far the rank obstruction, together with the existing classification
results for $G$-structures on spheres, comes toward a complete answer.

More generally, \cref{thm:fixed-frame} applies to any orthogonal representation for
which a subgroup fixes a sufficiently large frame and the quotient $G/K$ lies in the
connectivity range of the corresponding Stiefel manifold.  The adjoint
representation is a particularly natural instance because maximal tori provide the
fixed directions canonically. This suggests a second question.

\begin{problem}
Let $G$ be a nontrivial compact connected Lie group of dimension $d$ and rank $r$.
Determine the largest integer $0\le q<d$ for which
\[
  \Ad\colon G\to\SO(d)
\]
is homotopic to a map with values in $\SO(d-q)$.  In particular, characterize
when the maximal-torus bound $q\ge r-1$ supplied by
\cref{thm:adjoint-compression} is sharp.
\end{problem}

The maximal-torus bound need not be optimal: other subgroups may provide larger fixed frames, and further compression may also arise from unstable homotopy-theoretic phenomena not detected by the fixed-frame criterion.

\section*{Acknowledgments}
The author thanks Luis Montejano for general discussions during the development of this work. This work was supported by UNAM--PAPIIT grant
IN119026 and was developed with support from UNAM DGAPA--PASPA during a sabbatical stay at the Instituto de
Matem\'aticas, Unidad Juriquilla, UNAM.

The work was done in collaboration with OpenAI ChatGPT (GPT-5.6 Sol). The model formulated an initial sketch of the compression argument and assisted with the organization and exposition of the manuscript. The argument was subsequently carefully audited, developed, revised, and fully understood by the author. The author assumes full responsibility for the mathematical content and for the accuracy and completeness of the references and literature review.

\end{document}